\newtheorem{thm}{Theorem}
\newtheorem{prop}[thm]{Proposition}
\newtheorem{assert}[thm]{Assertion}
\newtheorem{remarks}[thm]{Remark}
\newtheorem{definition}[thm]{Definition}
\newtheorem{exl}[thm]{Example}
\numberwithin{thm}{section}
\newcommand{\adj}{\leftrightarrow}
\newcommand{\adjeq}{\leftrightarroweq}
\DeclareMathOperator{\Fix}{Fix}
\def\Z{{\mathbb Z}}
\def\N{{\mathbb N}}
\begin{document}
\title{Remarks on Fixed Point Assertions in Digital Topology, 10}
\author{Laurence Boxer
\thanks{Department of Computer and Information Sciences, Niagara University, NY 14109, USA
and  \newline
Department of Computer Science and Engineering, State University of New York at Buffalo \newline
email: boxer@niagara.edu
\newline
ORCID: 0000-0001-7905-9643
}
}

\date{ }
\maketitle

\begin{abstract}
The topic of fixed points in digital metric spaces continues to draw
publications with assertions that are incorrect, incorrectly proven, trivial,
or incoherently stated. We continue the work of our earlier papers that
discuss publications with bad assertions concerning fixed points
of self-functions on digital images.

MSC: 54H25

Key words and phrases: digital topology, digital image,
fixed point, digital metric space
\end{abstract}

\section{Introduction}
Published assertions about fixed points in digital topology include
some that are beautiful and many that are incorrect, incorrectly
proven, or trivial. This paper continues the work
of~\cite{BxSt19, Bx19, Bx19-3, Bx20, Bx22, BxBad6, BxBad7, BxBad8, BxBad9}
in discussing flaws in papers that have come to our
attention since acceptance for publication of~\cite{BxBad9}.

We quote~\cite{BxBad8}:
\begin{quote}
... the notion of
a ``digital metric space" has led many authors to attempt,
in most cases either erroneously or trivially, to modify fixed
point results for Euclidean spaces to digital images. 
This notion contains roots of all the
flawed papers studied in the current paper.
See~\cite{Bx20} for discussion of why ``digital metric space"
does not seem a worthy topic of further research.
\end{quote}

\section{Preliminaries}
Much of the material in this section is quoted or
paraphrased from~\cite{Bx20}.

We use $\N$ to represent the natural numbers,
$\N^* = \N \cup \{0\}$, and
$\Z$ to represent the integers. 
The literature uses both $|X|$
and $\#X$ for the cardinality of~$X$. 

A {\em digital image} is a pair $(X,\kappa)$, where $X \subset \Z^n$ 
for some positive integer $n$, and $\kappa$ is an adjacency relation on $X$. 
Thus, a digital image is a graph.
In order to model the ``real world," we usually take $X$ to be finite,
although occasionally we consider
infinite digital images, e.g., for digital analogs of
covering spaces. The points of $X$ may be 
thought of as the ``black points" or foreground of a 
binary, monochrome ``digital picture," and the 
points of $\Z^n \setminus X$ as the ``white points"
or background of the digital picture.

\subsection{Adjacencies, 
continuity, fixed point}

In a digital image $(X,\kappa)$, if
$x,y \in X$, we use the notation
$x \adj_{\kappa}y$ to
mean $x$ and $y$ are $\kappa$-adjacent; we may write
$x \adj y$ when $\kappa$ can be understood. 
We write $x \adjeq_{\kappa}y$, or $x \adjeq y$
when $\kappa$ can be understood, to
mean 
$x \adj_{\kappa}y$ or $x=y$.

The most commonly used adjacencies in the study of digital images 
are the $c_u$ adjacencies. These are defined as follows.
\begin{definition}
\label{cu-adj-Def}
Let $X \subset \Z^n$. Let $u \in \Z$, $1 \le u \le n$. Let 
$x=(x_1, \ldots, x_n),~y=(y_1,\ldots,y_n) \in X$. Then $x \adj_{c_u} y$ if 
\begin{itemize}
    \item $x \neq y$,
    \item for at most $u$ distinct indices~$i$,
    $|x_i - y_i| = 1$, and
    \item for all indices $j$ such that $|x_j - y_j| \neq 1$ we have $x_j=y_j$.
\end{itemize}
\end{definition}

\begin{definition}
\label{path}
{\rm (See \cite{Khalimsky})} 
    Let $(X,\kappa)$ be a digital image. Let
    $x,y \in X$. Suppose there is a set
    $P = \{x_i\}_{i=0}^n \subset X$ such that
$x=x_0$, $x_i \adj_{\kappa} x_{i+1}$ for
$0 \le i < n$, and $x_n=y$. Then $P$ is a
{\em $\kappa$-path} (or just a {\em path}
when $\kappa$ is understood) in $X$ from $x$ to $y$,
and $n$ is the {\em length} of this path.
\end{definition}

\begin{definition}
{\rm \cite{Rosenfeld}}
A digital image $(X,\kappa)$ is
{\em $\kappa$-connected}, or just {\em connected} when
$\kappa$ is understood, if given $x,y \in X$ there
is a $\kappa$-path in $X$ from $x$ to $y$. The {\rm $\kappa$-component of~$x$ in~$X$} is the
maximal $\kappa$-connected subset
of~$X$ containing~$x$.
\end{definition}

\begin{definition}
{\rm \cite{Rosenfeld, Bx99}}
Let $(X,\kappa)$ and $(Y,\lambda)$ be digital
images. A function $f: X \to Y$ is 
{\em $(\kappa,\lambda)$-continuous}, or
{\em $\kappa$-continuous} if $(X,\kappa)=(Y,\lambda)$, or
{\em digitally continuous} when $\kappa$ and
$\lambda$ are understood, if for every
$\kappa$-connected subset $X'$ of $X$,
$f(X')$ is a $\lambda$-connected subset of $Y$.
\end{definition}

\begin{thm}
{\rm \cite{Bx99}}
A function $f: X \to Y$ between digital images
$(X,\kappa)$ and $(Y,\lambda)$ is
$(\kappa,\lambda)$-continuous if and only if for
every $x,y \in X$, if $x \adj_{\kappa} y$ then
$f(x) \adjeq_{\lambda} f(y)$.
\end{thm}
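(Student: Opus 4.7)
The plan is to prove both directions of the biconditional directly from the definitions. For the forward direction, I assume $f$ is $(\kappa,\lambda)$-continuous and fix $x \adj_{\kappa} y$. Then $\{x,y\}$ is a $\kappa$-connected subset of $X$ (the two points are joined by a one-edge path), so by hypothesis $f(\{x,y\}) = \{f(x), f(y)\}$ is $\lambda$-connected. If $f(x) = f(y)$ we immediately have $f(x) \adjeq_{\lambda} f(y)$. Otherwise, $\{f(x), f(y)\}$ has two distinct points, and the only way such a two-point set can be $\lambda$-connected is for the two points to be $\lambda$-adjacent (any $\lambda$-path between them must consist of an edge). So $f(x) \adj_{\lambda} f(y)$.

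For the reverse direction, assume the edge-preservation property and let $X' \subset X$ be $\kappa$-connected. I want to show $f(X')$ is $\lambda$-connected, so I pick two arbitrary points $u, v \in f(X')$ and exhibit a $\lambda$-path between them in $f(X')$. Choose preimages $a, b \in X'$ with $f(a)=u$, $f(b)=v$; by connectedness of $X'$ there is a $\kappa$-path $a=x_0, x_1, \ldots, x_n=b$ in $X'$. Applying the hypothesis along each consecutive pair gives $f(x_i) \adjeq_{\lambda} f(x_{i+1})$. After deleting any consecutive repetitions caused by the ``or equal'' case, what remains is a genuine $\lambda$-path in $f(X')$ from $u$ to $v$.

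The only mildly delicate point is handling the possibility $f(x_i) = f(x_{i+1})$ in the second direction, which is why the statement uses $\adjeq_{\lambda}$ rather than $\adj_{\lambda}$; this is really bookkeeping rather than a genuine obstacle. The forward direction similarly hinges on the small observation that a two-point set is connected iff its two points are adjacent, which follows directly from the path definition. I do not expect any technical difficulty; the proof is essentially a direct unfolding of the connectedness and continuity definitions given in the Preliminaries.
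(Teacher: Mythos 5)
Your proof is correct and is essentially the standard argument for this equivalence; the paper itself states this result without proof (citing \cite{Bx99}), and the original proof there proceeds by the same two observations you use --- a two-point set is connected iff its points are adjacent, and images of paths become paths after collapsing consecutive repetitions. No gaps.
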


We use 
$C(X,\kappa)$ for the set of functions 
$f: X \to X$ that are $\kappa$-continuous.

A {\em fixed point} of a function $f: X \to X$ 
is a point $x \in X$ such that $f(x) = x$. We denote by
$\Fix(f)$ the set of fixed points of $f: X \to X$.

As a convenience, if $x$ is a point in the
domain of a function $f$, we will often
abbreviate ``$f(x)$" as ``$fx$".
Also, if $f: X \to X$, we use
``$f^n$" for the $n$-fold composition
\[ f^n = \overbrace{f \circ \ldots \circ f}^{n}
\]

\subsection{Digital metric spaces}
\label{DigMetSp}
A {\em digital metric space}~\cite{EgeKaraca-Ban} is a triple
$(X,d,\kappa)$, where $(X,\kappa)$ is a digital image and $d$ is a metric on $X$. The
metric is usually taken to be the Euclidean
metric or some other $\ell_p$ metric; 
alternately, $d$ might be taken to be the
shortest path metric. These are defined
as follows.
\begin{itemize}
    \item Given 
          $x = (x_1, \ldots, x_n) \in \Z^n$,
          $y = (y_1, \ldots, y_n) \in \Z^n$,
          $p > 0$, $d$ is the $\ell_p$ metric
          if \[ d(x,y) =
          \left ( \sum_{i=1}^n
          \mid x_i - y_i \mid ^ p
          \right ) ^ {1/p}. \]
          Note the special cases: if $p=1$ we
          have the {\em Manhattan metric}; if
          $p=2$ we have the 
          {\em Euclidean metric}.
    \item \cite{ChartTian} If $(X,\kappa)$ is a 
          connected digital image, 
          $d$ is the {\em shortest path metric}
          if for $x,y \in X$, $d(x,y)$ is the 
          length of a shortest
          $\kappa$-path in $X$ from $x$ to $y$.
\end{itemize}


We say a metric space $(X,d)$ is {\em uniformly discrete}
if there exists $\varepsilon > 0$ such that
$x,y \in X$ and $d(x,y) < \varepsilon$ implies $x=y$.

\begin{remarks}
\label{unifDiscrete}
If $X$ is finite or  
\begin{itemize}
\item {\rm \cite{Bx19-3}}
$d$ is an $\ell_p$ metric, or
\item $(X,\kappa)$ is connected and $d$ is 
the shortest path metric,
\end{itemize}
then $(X,d)$ is uniformly discrete.

For an example of a digital metric space
that is not uniformly discrete, see
Example~2.10 of~{\rm \cite{Bx20}}.
\end{remarks}

We say a sequence $\{x_n\}_{n=0}^{\infty}$ is 
{\em eventually constant} if for some $m>0$, 
$n>m$ implies $x_n=x_m$.
The notions of convergent sequence and complete digital metric space are often trivial, 
e.g., if the digital image is uniformly 
discrete, as noted in the following, a minor 
generalization of results 
of~\cite{HanBan,BxSt19}.

\begin{prop}
\label{eventuallyConst}
{\rm \cite{Bx20}}
If $(X,d)$ is a uniformly discrete metric space. 
then any Cauchy sequence in $X$
is eventually constant, and $(X,d)$ is a complete metric space.
\end{prop}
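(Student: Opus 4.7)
The plan is to unpack the definitions in the obvious order: use the uniform discreteness constant as the Cauchy tolerance, then note that eventually constant sequences trivially converge.

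First I would fix the $\varepsilon > 0$ supplied by the hypothesis of uniform discreteness, so that $d(x,y) < \varepsilon$ forces $x=y$ for all $x,y \in X$. Let $\{x_n\}_{n=0}^\infty$ be an arbitrary Cauchy sequence in $(X,d)$. Applying the Cauchy condition to this specific $\varepsilon$ yields an index $m$ such that $d(x_n,x_k) < \varepsilon$ whenever $n,k \ge m$. By the uniform discreteness hypothesis, each such pair satisfies $x_n = x_k$. In particular, $x_n = x_m$ for every $n \ge m$, which is precisely the definition of $\{x_n\}$ being eventually constant.

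Next I would observe the elementary fact that an eventually constant sequence converges: if $x_n = x_m$ for all $n \ge m$, then for any $\delta > 0$ one has $d(x_n, x_m) = 0 < \delta$ for all $n \ge m$, so $x_n \to x_m$. Thus the arbitrary Cauchy sequence $\{x_n\}$ converges in $X$, which is exactly the statement that $(X,d)$ is complete.

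There is no real obstacle here; the only thing to be careful about is that the $\varepsilon$ from uniform discreteness is a single fixed positive number and may be used as the tolerance in the Cauchy condition, which is legitimate since the Cauchy condition holds for every positive tolerance. Once this minor point is noted, both conclusions of the proposition follow in a couple of lines.
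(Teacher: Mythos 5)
Your argument is correct and is the standard one: the paper itself states this proposition only as a citation to an earlier work and gives no proof, but your proof is exactly what is intended. Using the uniform-discreteness constant $\varepsilon$ as the Cauchy tolerance to force $x_n = x_k$ for all $n,k \ge m$, and then noting that an eventually constant sequence converges, establishes both conclusions with no gaps.
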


\subsection{A popular error}
\label{popularErrSec}
An error made by several authors, including a subset of those whose papers are
studied in the current work, is to confuse metric continuity (the
$\varepsilon - \delta$ version) and digital continuity.

Let $(X,d_X,\kappa)$ and $(Y,d_Y,\lambda)$ be digital metric spaces. 
Suppose~$d_X$ and~$d_Y$ are uniformly discrete. One sees easily
that every $f: X \to Y$ has metric continuity; however, depending on the
graph structures of $(X,\kappa)$ and $(Y,\lambda)$, there may be many
$f: X \to Y$ that fail to have digital continuity. The latter is true even
in many instances in which~$f$ is subject to additional restrictions, such as
being a digital contraction. Example~4.1 of~\cite{BxSt19} gives a counterexample to
the claim that a digital contraction mapping is digitally continuous.

\section{Banach contraction principle of \cite{EgeKaraca-Ban, Priyanka}}
The papers~\cite{EgeKaraca-Ban,Priyanka} state a digital version of the classic
Banach contraction principle. However, in each of these papers the ``proof" uses
the incorrect claim discussed in 
section~\ref{popularErrSec} about the digital continuity of a
digital contraction mapping .

We prove a slightly generalized version of the digital Banach contraction
principle in this section.

\begin{thm}
    \label{EKBanachThm}
    Let $(X,d,\kappa)$ be a uniformly discrete digital metric space.
    Let $f: X \to X$ be a digital contraction map. Then $f$ has a
    unique fixed point.
\end{thm}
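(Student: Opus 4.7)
The plan is to run the Picard iteration in the standard way, but to use uniform discreteness to close the argument without any appeal to (digital) continuity of $f$, which is precisely the step that the proofs in~\cite{EgeKaraca-Ban,Priyanka} bungle.

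First, I would fix an arbitrary $x_0 \in X$ and define the orbit $x_{n+1} = f(x_n) = f^{n+1}(x_0)$. Writing $\alpha \in [0,1)$ for the contraction constant, a one-line induction using $d(fx,fy) \le \alpha\, d(x,y)$ gives
\[
 d(x_{n+1}, x_n) \le \alpha^{n}\, d(x_1, x_0)
\]
for every $n \ge 0$. Now invoke uniform discreteness: pick $\varepsilon > 0$ such that $d(x,y) < \varepsilon$ forces $x = y$. Since $\alpha < 1$, we have $\alpha^{n}\, d(x_1,x_0) < \varepsilon$ for all sufficiently large~$n$, so $d(x_{n+1},x_n) < \varepsilon$, hence $x_{n+1} = x_n$, i.e., $f(x_n) = x_n$. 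Thus $x_n$ is a fixed point of~$f$. (One could equivalently observe that the Picard sequence is Cauchy by the usual geometric-series estimate, so by Proposition~\ref{eventuallyConst} it is eventually constant, giving the same fixed point; but the direct argument above is shorter.)

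For uniqueness, suppose $p, q \in \Fix(f)$ with $p \neq q$. Then
\[
 d(p,q) = d(fp, fq) \le \alpha\, d(p,q),
\]
which forces $(1-\alpha)\, d(p,q) \le 0$, contradicting $d(p,q) > 0$. Hence the fixed point is unique.

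The only potential obstacle is the temptation to imitate the classical Banach proof, which identifies the limit of the Picard sequence and then uses continuity of~$f$ to conclude the limit is fixed. In the digital setting that step is illegitimate, as emphasized in Section~\ref{popularErrSec}. The key observation that sidesteps this is that uniform discreteness collapses the convergence to eventual constancy, so the value $f(x_n) = x_{n+1} = x_n$ delivers the fixed point directly from the definition of~$f$, with no continuity hypothesis required.
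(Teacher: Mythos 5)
Your proposal is correct and follows essentially the same route as the paper: Picard iteration, a geometric decay estimate on consecutive distances, and uniform discreteness to force eventual constancy of the orbit so that $f(x_n)=x_{n+1}=x_n$ yields the fixed point without any continuity assumption, plus the standard uniqueness argument. The only cosmetic difference is that you apply uniform discreteness directly to consecutive terms rather than first noting the sequence is Cauchy and invoking Proposition~\ref{eventuallyConst}, a shortcut you yourself observe is equivalent.
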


\begin{proof}
    Since $f$ is a digital contraction map, there exists $k$, $0 \le k < 1$,
    such that $d(fx,fy) \le kd(x,y)$ for all $x,y \in X$. Let $x_0 \in X$
    and consider the sequence defined by $x_{n+1}= fx_n$. We have
    \[ d(x_{n+1},x_n) = d(fx_n,fx_{n-1}) \le k d(x_n, x_{n-1})
    \]
    so simple induction gives
    \[ d(x_{n+m},x_n) \le k^{m-1} d(x_{n+1}, x_n) \le k^{m+n}d(x_1,x_0)
       \to_{n \to \infty} 0.
       \]
    Thus, $\{x_n\}_{n=0}^{\infty}$ is a Cauchy sequence. 
    By Proposition~\ref{eventuallyConst}, there exists $z \in X$ such that
    for almost all~$n$, $z = x_n$. Therefore, for almost all~$n$,
\[ fz=fx_n = x_{n+1} = z,
    \]
    so $z$ is a fixed point of $f$.

    Uniqueness is shown as follows. Let $z, z' \in \Fix(f)$. Then
    \[ d(z,z') = d(fz,fz') \le kd(z,z')
    \]
    so $d(z,z') = 0$, i.e., $z=z'$.    
\end{proof}

\section{\cite{GopalEtal}'s compressions}
\subsection{Overall}
This is a paper whose English and other issues of presentation are
so bad that they may divert attention
from the inadequacy of its attempts at mathematics. Several times, where it seems
desirable to quote from~\cite{GopalEtal}, we use a picture of what
appears in the paper in order to allay fear of misquotation.

\begin{itemize} 
    \item The word ``proof" is used to introduce various statements better
          called ``definition," ``lemma," ``proposition," ``theorem," or
          ``remark."
    \item Where ``Proof" should appear, we see ``Definition".
    \item Labels that should be distinct are not. There are ``Proof 1" and
    ``Proof 2" in the ``Preliminaries" section and again
    in the ``Results" section.
    \item All apparent attempts to ``prove" something are concerned with 
          assertions that are not well defined as stated.
\end{itemize}
  
\subsection{Improper citations in ``Preliminaries" section}
``Proof 1" cites as its source for a definition of~$\Z^n$ a 2018 paper,
rather than a classic source.

``Proof 2" cites \cite{Bx99}, rather than the correct~\cite{Bx07}, as
the source of its definition for $k_m$ adjacency (what we call $c_m$ adjacency).
The definition is also misquoted, requiring $m < n$ rather than the correct
$m \le n$, where the image $X \subset \Z^n$ and $m$ is the maximum number
of coordinates in which adjacent points differ.

\subsection{``Preliminaries Proof 4"}
\begin{figure}
    \includegraphics{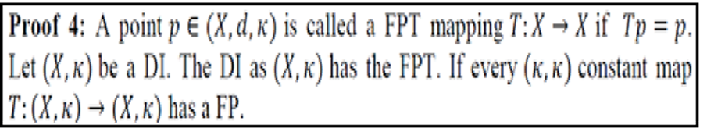}
    \caption{``Proof 4" of the ``Preliminaries" section of~\cite{GopalEtal}}
    \label{fig:GopalPrelimPrf4}
\end{figure}

``Proof 4" of the ``Preliminaries" section is shown in
Figure~\ref{fig:GopalPrelimPrf4}.
Abbreviations appearing in this ``Proof" are:

\[
\begin{array}{ll}
   \underline{Abbreviation} & \underline{Represents} \\
   FPT &  Fixed~Point~Theorem \\
   DI  & Digital~Image \\
   FP  & Fixed~Point
\end{array}
\]

What fixed point theorem is represented by ``FPT" is not explained.
Note the following.
\begin{itemize}
    \item Since when do we call a point a ``mapping"?
    \item Every constant self-map of~$X$ has a fixed point - its constant value.
    \item If the authors conclude that a digital image whose constant self maps
          have fixed points must have a fixed point property - i.e., that every
          self map has a fixed point - they are wrong, as demonstrated by
          the function $f: [0,1]_{\Z} \to [0,1]_{\Z}$ given by
          $f(x) = 1 - x$. If ``constant" is meant to be ``continuous" so that
          the intended statement is meant to define 
          {\em Fixed Point Property (FPP)},
          note that a digital image~$X$ has the 
          FPP if and only if $\#X = 1$~\cite{BEKLL}.
\end{itemize}

\subsection{\cite{GopalEtal}'s ``Results Proof 1"}
\cite{GopalEtal} calls upon the {\em Hausdorff metric}, a key to
Hundreds of papers in geometric topology. A classic reference on the 
{\em hyperspace} $(2^X, H)$ of compact subsets of~$X$ 
and the Hausdorff metric~$H$ based on~$d$, is~\cite{Nadler}.

``Proof 1" of the ``Results" section of~\cite{GopalEtal} seems to be an
attempt to state a theorem - see Figure~\ref{fig:GopalEtalPrf1}. Note the
following.
\begin{itemize}
    \item The left side of the inequality uses the metric~$H$ and
          the right side uses~$d$. It seems likely that the same
          metric should be used on both sides of the inequality - but
          which? While it seems probable that $d$ is the preferred metric,
          the first line of the ``Definition" [which seems, despite this
          designation, an attempt to prove the assertion] uses
          ``$\xi_1 \in T\xi_0$", suggesting that~$T$ is regarded as
          a multivalued function, hence $T\xi$ belongs to $2^X$ rather than
          to~$X$, and therefore that~$H$ really is the intended metric.

          Similar flaws appear in the other ``Proofs" 
          of~\cite{GopalEtal}'s "Results" section.
    \item It seems likely that the coefficient~$b$ appearing in the
          inequality is intended to coincide with the constant~$\theta_2$
          appearing on the following line.
\end{itemize}

Also, it seems likely that the symbols $\xi_n$ and $x_n$, that appear
in what seems to be an attempt at proving this assertion, are meant to
be identified - see Figure~\ref{fig:GopalEtalPrf1Err1}.
 
\begin{figure}
    \includegraphics{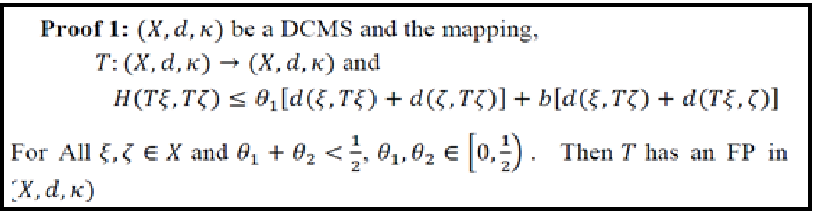}
    \caption{``Proof 1" of the ``Results" section of~\cite{GopalEtal}}
    \label{fig:GopalEtalPrf1}
\end{figure}

It seems the likely that the intended assertion of ``Proof 1" of the ``Results" section of~\cite{GopalEtal} is the following (we add that the fixed point is
unique).

\begin{thm}
    \label{uncorrectedGopalPrf1}
    Let $(X,d,\kappa)$ be a digital metric space, where~$d$ is
    uniformly discrete. Let $T: X \to X$ such that
    for all $x,y \in X$ and some nonnegative constants $a,b$ such that
    $0 \le a+b < 1/2$,
    \[ d(Tx,Ty) \le a[d(x,Tx) + d(y,Ty)] +b[d(x,Ty)+ d(Tx,y)].
    \]
    Then $T$ has a unique fixed point.
\end{thm}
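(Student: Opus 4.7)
The plan is to imitate the proof of Theorem~\ref{EKBanachThm}: starting from any $x_0 \in X$, form the Picard iterates $x_{n+1} = Tx_n$, show this sequence is Cauchy, invoke Proposition~\ref{eventuallyConst} to conclude it is eventually constant, and note that its eventual value must be a fixed point. The entire role of the uniform discreteness is to replace the usual convergence-of-limits step (which would be trivial in a complete metric space) by the observation that eventually $x_n = z$, so $Tz = Tx_n = x_{n+1} = z$.

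The first real step is to extract a Banach-style contraction inequality from the Kannan--Chatterjea-type hypothesis. Setting $x=x_n$, $y=x_{n-1}$ in the assumed inequality gives
\[
d(x_{n+1},x_n) \le a[d(x_n,x_{n+1})+d(x_{n-1},x_n)] + b[d(x_n,x_n)+d(x_{n+1},x_{n-1})].
\]
Bounding $d(x_{n+1},x_{n-1})\le d(x_{n+1},x_n)+d(x_n,x_{n-1})$ by the triangle inequality and collecting the $d(x_{n+1},x_n)$ terms on the left yields
\[
(1-a-b)\,d(x_{n+1},x_n) \le (a+b)\,d(x_n,x_{n-1}).
\]
Since $a+b<1/2$ we have $1-a-b>1/2>a+b$, so $h := (a+b)/(1-a-b) \in [0,1)$. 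Induction then gives $d(x_{n+1},x_n)\le h^n d(x_1,x_0)$, and a standard geometric-series triangle-inequality estimate shows $\{x_n\}_{n=0}^\infty$ is Cauchy.

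By Proposition~\ref{eventuallyConst}, the Cauchy sequence is eventually constant: there exists $z\in X$ with $x_n=z$ for all sufficiently large $n$. For such $n$, $Tz=Tx_n=x_{n+1}=z$, so $z\in\Fix(T)$. For uniqueness, suppose $z,z'\in\Fix(T)$. The hypothesis applied to $(z,z')$ gives
\[
d(z,z')=d(Tz,Tz')\le a[d(z,z)+d(z',z')]+b[d(z,z')+d(z,z')]=2b\,d(z,z'),
\]
and since $2b\le 2(a+b)<1$ we conclude $d(z,z')=0$, i.e.\ $z=z'$.

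The only subtle point is the algebraic manipulation that turns the Kannan--Chatterjea-style inequality into a genuine geometric decay for consecutive iterates; this is where the quantitative hypothesis $a+b<1/2$ gets used, both to make $h<1$ in the contraction estimate and to make $2b<1$ in the uniqueness argument. Once this is in hand, everything else is formally identical to the proof of Theorem~\ref{EKBanachThm}.
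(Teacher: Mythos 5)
Your proposal is correct and follows essentially the same route as the paper's proof: Picard iteration, the triangle-inequality bound on $d(x_{n+1},x_{n-1})$ to obtain $(1-(a+b))\,d(x_{n+1},x_n)\le(a+b)\,d(x_n,x_{n-1})$, the Cauchy/eventually-constant argument via Proposition~\ref{eventuallyConst}, and the same $2b<1$ uniqueness computation. No substantive differences.
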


\begin{figure}
    \includegraphics{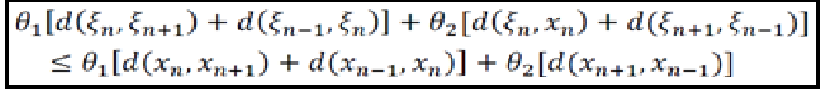}
    \caption{In the argument for ``Proof 1" of the ``Results" section
    of~\cite{GopalEtal}, it seems likely that the symbols $\xi_n$ and $x_n$ are
            identified for all~$n$.}
    \label{fig:GopalEtalPrf1Err1}
\end{figure}

\begin{proof}
    The argument given in~\cite{GopalEtal} can be corrected and
    abbreviated as follows.

    Let $x_0 \in X$, Inductively, let $x_{n+1}=Tx_n$. We have, for $n>0$,
    \[ d(x_n,x_{n+1}) = d(Tx_{n-1}, Tx_n) \le \]
    \[   a[d(x_{n-1},Tx_{n-1}) + d(x_n,Tx_n)] + b[d(x_{n-1},Tx_n) + 
                  d(Tx_{n-1},x_n)] =
    \]
    \[ a[d(x_{n-1},x_n)+ d(x_n,x_{n+1})] + b[d(x_{n-1},x_{n+1}) + d(x_n,x_n)] \le
    \]
    \[ a[d(x_{n-1},x_n)+ d(x_n,x_{n+1})] + b[d(x_{n-1},x_n) + d(x_n,x_{n+1})+0 ] =
    \]
    \[ (a+b) [d(x_{n-1},x_n)+ d(x_n,x_{n+1})].
    \]
    So
    \[ [1 - (a+b)] d(x_n,x_{n+1}) \le (a+b) d(x_{n-1},x_n),
    \]
    or, for $0 \le A = \frac{a+b}{1-(a+b)} < 1$, 
    $d(x_n,x_{n+1}) \le Ad(x_{n-1},x_n)$, and by a simple induction,
    \[ d(x_n,x_{n+1}) \le A^n d(x_0,x_1).
    \]
    Therefore, for $m > n$
    \[ d(x_n,x_m) \le \sum_{i=0}^{m-1} d(x_{n+i},x_{n+i+1}) \le 
       \sum_{i=0}^{m-1}A^{n+i}d(x_0,x_1) \le 
    \]
    \[   A^m d(x_0,x_1) \to_{n \to \infty} 0.
    \]
    Thus $\{x_n\}_{n=0}^{\infty}$ is a Cauchy sequence.
    By Proposition~\ref{eventuallyConst}, we have that, for almost all~$n$,
    $x_n =  x_{n+1} = Tx_n$. Thus, $T$ has a fixed point.

To show uniqueness, suppose $u,v \in \Fix(T)$. Then
\[ d(u,v) = d(Tu,Tv) \le 
\]
\[ a[d(u,Tu) + d(v,Tv)] + b[d(u,Tv) + d(Tu,v)] = 
\]
\[ a[0+0] +b[d(u,v) + d(u,v)] = 2b \cdot d(u,v).
\]
Since $2b < 1$, we have $d(u,v) = 0$, i.e., $u=v$.
\end{proof}

\subsection{\cite{GopalEtal}'s ``Results Proof 2"}
\begin{figure}
    \includegraphics{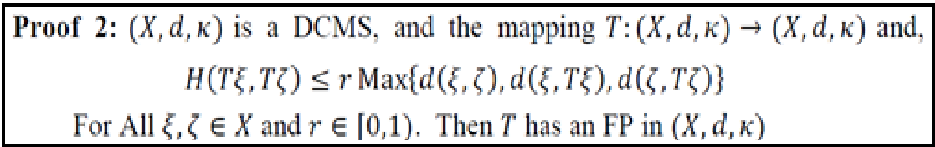}
    \caption{The assertion ``Proof 2" of the ``Results"
                 of~\cite{GopalEtal}}
    \label{fig:GopEtalRltsPrf2.eps}
\end{figure}

``Proof 2" of the ``Results" of~\cite{GopalEtal} appears 
in Figure~\ref{fig:GopEtalRltsPrf2.eps}. This assertion and
the argument offered as its ``proof" seem to have
the flaws mentioned above of confusing the metric~$d$ with the
Hausdorff metric, confusing single-valued and
multiple-valued functions, and identifying the symbols~$\xi_n$
and~$x_n$. One suspects the
assumption of this ``Proof" is the following.

\begin{assert}
    \label{uncorrectedGopPrf2}
    Let $(X,d,\kappa)$ be a digital metric space. Let $T: X \to X$
    satisfy, for $0 \le r < 1$ and all $\xi, \zeta \in X$,
    \[ d(T\xi,T\zeta) \le r \max\{d(\xi,\zeta), d(\xi, T\xi), d(\zeta, T\zeta)\}.
    \]
    Then $T$ has a fixed point.
\end{assert}

    \begin{figure}
    \includegraphics{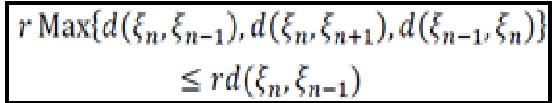}
    \caption{Unjustified claim in ``proof" of ``Proof 2" in ``Results"
    of~\cite{GopalEtal}}
    \label{fig:GopPrf2Err.eps}
\end{figure}

The argument given in~\cite{GopalEtal} as ``proof" of 
Assertion~\ref{uncorrectedGopPrf2} reaches lines marked ``(30)" and ``(31)",
where the inequality shown in Figure~\ref{fig:GopPrf2Err.eps} appears. For
this inequality to be valid, we need to be able to assume that
\begin{equation}
\label{GopPrf2Ineq}
    d(\xi_{n+1}, \xi_n) \le d(\xi_n,\xi_{n-1})
\end{equation}
as part of an attempt
to show that $\{\xi_n\}_{n=0}^{\infty}$ is a Cauchy sequence.
As no reason is given to justify the claim~(\ref{GopPrf2Ineq}),
we conclude that Assertion~\ref{uncorrectedGopPrf2} is unproven.

\subsection{\cite{GopalEtal}'s ``Results Proof 3.3"}
``Proof 3.3" and the argument for its ``proof" in~\cite{GopalEtal} 
share blemishes with previous assertions: confusing the metric~$d$ with the
Hausdorff metric, confusing single-valued and
multiple-valued functions, and, in the attempted proof,
identifying the symbols~$\xi_n$
and~$x_n$. See Figures~\ref{fig: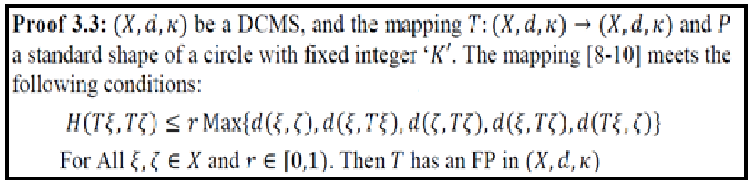}
and~\ref{fig: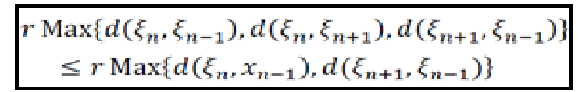}. We also note that
``$P$ a standard shape of a circle with fixed integer '$K$'" is 
unexplained and not apparently called for in the ``Definition" [attempted
proof].

    \begin{figure}
    \includegraphics{GopPrf3-3.eps}
    \caption{Assertion of ``Proof 3.3" in ``Results" of~\cite{GopalEtal}}
    \label{fig:GopPrf3-3.eps}
\end{figure}

One suspects the assumption of this ``Proof" is the following.

\begin{assert}
    \label{uncorrectedGop3-3}
        Let $(X,d,\kappa)$ be a digital metric space. Let $T: X \to X$
    satisfy, for $0 \le r < 1$ and all $\xi, \zeta \in X$,
    \[ d(T\xi,T\zeta) \le r \max\{d(\xi,\zeta), d(\xi, T\xi), d(\zeta, T\zeta),
       d(\xi,T\zeta), d(T\xi, \zeta) \}.
    \]
    Then $T$ has a fixed point.
\end{assert}

    \begin{figure}
    \includegraphics{Gop3.3Ineq.eps}
    \caption{Unjustified inequality in ``Proof 3.3" in ``Results" of~\cite{GopalEtal}}
    \label{fig:Gop3.3Ineq.eps}
\end{figure}

The argument offered in proof of this assertion displays the
following errors.
\begin{itemize}
   \item We reach lines marked ``(44)" and~``(45)", where the 
         inequality shown in Figure~\ref{fig:Gop3.3Ineq.eps} appears.
         Even when we assume $\xi_{n-1}=x_{n-1}$, no justification is
         offered for dropping $d(\xi_n, \xi_{n+1})$ as a parameter of
         the $\max$ function.
    \item The argument develops (with an error
that may be minor) an inequality
\[ d(\xi_{n+1},\xi_n) \le r \max\{d(\xi_n,\xi_{n-1}),d(\xi_{n+1},\xi_{n-1}) \}
\]
and proceeds, assuming that either
\begin{itemize}
    \item $d(\xi_{n+1},\xi_n) \le r d(\xi_n,\xi_{n-1})$ for all~$n$, or
    \item $d(\xi_{n+1},\xi_n) \le r d(\xi_{n+1},\xi_{n-1}) $ for all~$n$.
\end{itemize} 
No justification is given for the assumption that the same case is valid
for all~$n$.
\end{itemize}
We conclude that Assertion~\ref{uncorrectedGop3-3} is unproven.

\section{\cite{MishTrip}'s mappings}
The title of~\cite{MishTrip} is misleading, as this paper makes no
original assertions about intimate mappings. We therefore do not present
a definition of an intimate mapping.
The paper has many improper citations and flaws in its mathematics.

\subsection{Improper citations and related errors}
Many notions appearing in~\cite{MishTrip}
are attributed to non-original sources.
These include the following.

\[
\begin{array}{lll}
\underline{Notion} & \underline{\cite{MishTrip}~attrib.}
& \underline{Better~attrib.} \\
Def.~1: c_u-adjacent~(k-adj.)
& \cite{MishraEtAl19} & \cite{BxHtpyProps} \\
\#~c_u~neighbors~in~\Z^n & \cite{MishraEtal21} & \cite{Han08} \\
Def.~2:~digital~interval & \cite{KongPi1},~\cite{MishraEtAl19} & \cite{Bx94} \\
Def.~3:~dig.~neighbor,~nbhd. & \cite{MishraEtal21} & \cite{Rosenfeld}
\\ Def.~5:~dig.~continuous~funct., \\~~~~~~~~~~~isomorph. & \cite{MishraEtal21} &
\cite{Rosenfeld},~\cite{Bx94}~(homeomorph.) \\
Def.~6:~dig.~path & \cite{MishraEtal21} & \cite{Rosenfeld} \\
Def.~7:~metric & \cite{EgeKaraca-Ban} & classic
\end{array}
\]

Also note~\cite{MishTrip}'s Definition~7 has its metric function
incorrectly taking its values in~$\Z^n$ rather than in~$\Z$.

Other improper citations:

\[
\begin{array}{lll}
\underline{Notion} & \underline{\cite{MishTrip}~attrib.}
& \underline{Better~attrib.} \\
Def.~8:~Cauchy~seq. & \cite{MishraEtal21} & classic \\
Notion~1:~Cauchy~seq.~eventually~const. & \cite{MishraEtAl19} & \cite{HanBan} \\
Notion~2:~dig.~complete~metric~space & \cite{MishraEtAl19} & classic,~\cite{EgeKaraca-Ban} \\
Def.~10:~dig.~contraction~map & \cite{MishraEtal21} & \cite{EgeKaraca-Ban}
\end{array}
\]

A remark at the end of Definition~10 of~\cite{MishTrip} makes the mistake
discussed in section~\ref{popularErrSec}.

In Notion~4 of~\cite{MishTrip}, the concept of weakly commuting maps
is undefined.

Theorem~2.1 of~\cite{MishTrip}, the Banach Fixed Point Theorem of
digital topology, is incorrectly attributed to~\cite{MishraEtAl19}.
The statement of the theorem should be attributed to~\cite{EgeKaraca-Ban}, where,
as noted above, the ``proof" is incorrect.

Following the statement of Notion~5 in~\cite{MishTrip} is a claim that
``Converse of Notion 5 is not followed, in general." Neither example nor citation
is given to support this claim.

As most of the inappropriate attributions in~\cite{MishTrip} cite other
papers for which Mishra and Tripathi are also coauthors, there is present a
suggestion that they are claiming credit for the achievements of others.

\subsection{\cite{MishTrip}'s ``Theorem" 3.1}
``Theorem"~3.1 of~\cite{MishTrip} is stated below, somewhat paraphrased.
In this section, we remark on the errors in its statement and ``proof" and show
that the assertion is false.

\begin{assert}
\label{MishTrip3.1}
    Suppose $\emptyset \neq Y \subset \Z^n$ where $(Y,\theta, \kappa)$ is
    a digital metric space. Let $G,H: Y \to Y$ such that 
    $H(Y) \subset G(Y)$ and
    \[\theta(Hx,Hy) \le \rho \theta(Gx,Gy) \mbox{ for some constant } \rho,
    ~0 \le \rho < 1 \mbox{, and all } x,y \in Y.
    \]
    Then $G$ and $H$ have a unique common fixed point.
\end{assert}

Flaws in the argument given for this assertion:
\begin{itemize}
    \item The word ``unique" was omitted from the statement of this ``theorem"
in~\cite{MishTrip} and is claimed in the ``proof" without any attempt at justification.
\item A statement ``$H(x_n) \subset G(x_{n+1})$" should be ``$H(x_n) = G(x_{n+1})$".
\item A statement
\[ \theta(Gx_{n+1}, Gx_n) \le \rho^n d(Gx_1, Gx_0)
\]
should be
\[ \theta(Gx_{n+1}, Gx_n) \le \rho^n \theta(Gx_1, Gx_0)
\]
\item The authors arrive at $Hx_n = Gx_{n+1} \to_{n \to \infty} z \in Y$. They claim
      this implies~$z$ is a common fixed point of~$G$ and~$H$. No reason is given for this
      claim, which this writer does not see how to justify. Note that the claim represents
      three unproven subclaims: that~$G$ has a fixed point, that~$H$ has a fixed point, 
      and that these fixed points coincide.
\end{itemize}

In fact, Assertion~\ref{MishTrip3.1} is incorrect, as shown by the following.

\begin{exl}
    \label{MishTripCounterexample}
    Let $G,H: \Z \to \Z$ be the functions
    \[ H(x) = 0, ~~~~ G(x) = x+1.\]
    Let $\theta(x,y) = |x - y|$.
    Then $H(\Z) \subset G(\Z)$ and
    \[ \theta(Hx, Hy) = 0 \le 0.5 \cdot |x - y| = 0.5 \cdot \theta(x,y) 
    \]
    for all $x,y \in \Z$. However, $G$ has no fixed point.
\end{exl}

\subsection{\cite{MishTrip}'s ``Theorem" 3.2}
``Theorem"~3.2 of~\cite{MishTrip} has a subset of the assumptions of
Assertion~\ref{MishTrip3.1}, making it easy for us to show that this
assertion is false. The assertion is stated below.

\begin{assert}
\label{MishTrip3.2}
    Suppose $\emptyset \neq Y \subset \Z^n$ where $(Y,\theta, \kappa)$ is
    a digital metric space. Let $G,H: Y \to Y$ such that 
    \[\theta(Hx,Hy) \le \rho \theta(Gx,Gy) \mbox{ for some constant } \rho,
    ~0 \le \rho < 1 \mbox{, and all } x,y \in Y.
    \]
    Then $G$ and $H$ have a unique common fixed point.
\end{assert}

\begin{itemize}
    \item An example is given in~\cite{MishTrip} as ``proof" of this assertion. 
          Mathematicians should know that an example does not prove a more
          general assertion.
    \item Our Example~\ref{MishTripCounterexample} provides a counterexample to
          Assertion~\ref{MishTrip3.2}.
\end{itemize}
 
\subsection{\cite{MishTrip}'s ``Corollary"~3.1}
We use the following.

\begin{definition}
{\rm \cite{DalalEtal}}
\label{compatibleDef}
    Let $S,T: X \to X$, where $(X,d,\kappa)$ is a digital metric space.
    If for every $\{x_n\}_{n=0}^{\infty} \subset X$ such that
    \[ \lim_{n \to \infty} Sx_n = \lim_{n \to \infty} Tx_n = t \in X
    \]
    we have
    \[ \lim_{n \to \infty} d(STx_n, TSx_n) = 0
    \]
    then $S$ and $T$ are {\em compatible}.
\end{definition}

We show below that \cite{MishTrip}'s ``Corollary"~3.1 is not correctly proven.
The assertion is the following.

\begin{assert}
\label{MishTripCor}
    Let $G,H: Y \to Y$, where $(Y,\theta, k)$ is a digital metric space and
    $G$ and $H$ are strictly increasing {\em [does this mean $Y \subset \Z$?]}. Suppose
    $H(Y) \subset G(Y)$ and, for all $x,y \in Y$ and some constant $\rho \in [0,1)$,
    \[ \theta(Hx,Hy) \le \rho \theta(Gx,Gy).
    \]
    Then $G$ and $H$ are compatible.
\end{assert}

The argument used to ``prove" this assertion in~\cite{MishTrip} depends on
Assertion~\ref{MishTrip3.1}, which we showed, via Example~\ref{MishTripCounterexample},
is false. Therefore, Assertion~\ref{MishTripCor} must be regarded as unproven. 

\section{\cite{ParvRaman}'s common fixed point ``theorem"}
In this section, we discuss an incorrect assertion of~\cite{ParvRaman}. We have 
the following, stated as the only new ``theorem" of~\cite{ParvRaman}.

\begin{assert}
    {\rm \cite{ParvRaman}}
    \label{ParvRamanAssert}
    Let $(X,d,\kappa)$ be a complete digital metric space. 
    Let~$T,S: X \to X$ be continuous (the argument in the ``proof" clarifies that this
    means in the $\varepsilon - \delta$ sense). Suppose for all $x,y \in X$,
    \begin{equation}
    \label{ParvRamanIneq}
    d(Tx,Sy) \le \frac{d(x,Tx)d(x,Sy) + d(y,Sy)d(y,Tx)}{d(x,Sy) + d(y,Tx)}
    \end{equation}
If for some $x_0 \in X$ the sequence
\[ x_{2n+1} = Tx_{2n},~~~~~ x_{2n+2} = Sx_{2n+1}
\]
has a subsequence $\{x_{n_k}\}$ converging to $a \in X$, then~$a$ is a unique
common fixed point of~$S$ and~$T$.
\end{assert}

The argument given as ``proof" for Assertion~\ref{ParvRamanAssert} has
the following flaws.
\begin{itemize}
    \item The authors appear to have confused themselves with deceptive notation.
          They used ``$x_{nk}$" rather than ``$x_{n_k}$" for members of the
          subsequence. This appears to have led to their conclusion that the sequence with
          even subscripts $x_{2nk}$ converges. We have no reason to believe
          that $\{x_{2 \cdot n \cdot k}\}$ is a subsequence of $\{x_{2 \cdot n_k}\}$. However, the use of better notation seems to render this flaw as not serious.
    \item More important: towards the end of the ``proof," the authors claim that
          each of the following is a contradiction.
    \begin{itemize}
        \item ~~~~~$d(a,Ta) = d(Ta,STa) \le d(a,Ta)$
        \item ~~~~~$d(a,Sa) = d(Ta,STa) \le d(a,Sa)$
        \item ~~~~~Assuming $a=a'$, then $d(a,a') \le \ldots \le 0$
    \end{itemize}
        Clearly, none of these is a contradiction.
\end{itemize}

Further, we have the following. 
\begin{prop}
    Statement~{\rm (\ref{ParvRamanIneq})}is not well-defined.
\end{prop}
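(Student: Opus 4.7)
The plan is to observe that the right-hand side of~(\ref{ParvRamanIneq}) has the form $A/B$ with $B=d(x,Sy)+d(y,Tx)$, and to exhibit concrete, natural choices of $T$, $S$, $x$, $y$ for which $B=0$. Since~\cite{ParvRaman} offers no convention for the value of the fraction in this case, the inequality imposes no condition there, and the statement as written is not well-defined.

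First I would note that, because $d$ is a metric, $B=0$ holds if and only if $Sy=x$ and $Tx=y$ simultaneously. This is not a pathological configuration: it includes all common fixed points (take $x=y=a$ with $Ta=Sa=a$) and more generally every $2$-cycle shared by $T$ and $S$. Next I would give a minimal explicit example to rule out any escape via restrictive hypotheses on $X$: take $X=\{0,1\}\subset\Z$ with $\kappa=c_1$ and $d$ the Euclidean metric, and let $T=S$ be the swap $T(0)=1$, $T(1)=0$. For $x=0$ and $y=1$ we get $Sy=0=x$ and $Tx=1=y$, so both terms of $B$ vanish; the numerator also vanishes, leaving literally $0/0$ on the right, while the left side $d(Tx,Sy)=d(1,0)=1$ demands a value to be compared against.

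The only step is the verification above, which is immediate, so no real obstacle is expected. I would close by remarking that the defect is structural rather than accidental: the denominator is forced to vanish precisely at the kinds of points (common fixed points, or cycles involving $T$ and $S$) that Assertion~\ref{ParvRamanAssert} is attempting to produce, so the inequality is silent exactly where its content is most needed.
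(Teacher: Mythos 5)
Your proposal is correct, and the key idea is the same as the paper's: the right side of~(\ref{ParvRamanIneq}) degenerates to $\frac{0}{0}$ whenever $Sy=x$ and $Tx=y$. The difference is in the witness. The paper takes $x=y=x_0$ at a hypothetical common fixed point, so its argument is conditional (``suppose there exists a common fixed point''); yours produces an unconditional, fully concrete instance --- the swap on $\{0,1\}$ with a shared $2$-cycle --- where the right side is literally $0/0$ while the left side equals $1$, so the inequality cannot even be vacuously satisfied. That buys you a slightly stronger conclusion (ill-definedness is exhibited outright rather than contingent on the very fixed point the assertion is trying to construct), at the cost of a little more setup. Your closing structural remark --- that the denominator vanishes exactly at the configurations Assertion~\ref{ParvRamanAssert} aims to produce --- is in the same spirit as the paper's choice of witness and is a worthwhile observation either way.
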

\begin{proof}
    Suppose there exists a common fixed point~$x_0$. Then for $x=y=x_0$, the right
side of~(\ref{ParvRamanIneq}) becomes
\[ \frac{d(x_0,Tx_0) d(x_0,Sx_0) + d(x_0,Sx_0)d(x_0,Tx_0)}{d(x_0,Sx_0) + d(x_0,Tx_0)}
\]
Since $x_0$ is a common fixed point, this gives a right side of the form $\frac{0}{0}$,
which is undefined.
\end{proof}

\section{\cite{SalujaEtal}'s new contractive framework}
The paper~\cite{SalujaEtal} uses the notion of weakly commutative maps, defined
as follows.

\begin{definition}
    {\rm \cite{RaniEtAl}} Let $(X,d,\kappa)$ be a digital metric space.
    Let $S,T: X \to X$ such that
   \[     d(S(T(x)),T(S(x))) \le d(S(x),T(x)) \mbox{  for all  } x \in X.
   \]
    Then $S$ and $T$ are {\em weakly commutative} mappings.
\end{definition}

The only new assertion of~\cite{SalujaEtal} is the paper's
``Theorem"~3.1, which we show to be both trivial and false. This assertion,
slightly paraphrased, is as follows.

\begin{assert}
\label{SalujaEtal3.1}
    Let $(X,d,\kappa)$ be a digital metric space, where $d$ is the Euclidean metric.
    Let $J,K: X \to X$ such that
    \begin{enumerate}
        \item $K$ is continuous;
        \item $(J,K)$ is a weakly commutative pair of functions; and
        \item $J,K$ satisfy
       \[ \mbox{for all } u,q \in X \mbox{ and some } \xi 
            \mbox{ satisfying } 0 \le \xi < 1, 
            \]
        \begin{equation}
        \label{SalIneq}
            d(Ju,Jq) + d(Ku,Kq) \le \xi d(Ku,Kq).      
        \end{equation}
    \end{enumerate}
    Then $J$ and $K$ have a common fixed point.
\end{assert}

Property~(\ref{SalIneq}) is the ``new framework" in the title of~\cite{SalujaEtal}.

We need not use the assumption that $d$ is the Euclidean metric, nor the assumption
that~$K$ is continuous,
in order to show the flaws of Assertion~\ref{SalujaEtal3.1}. We have the
following.

\begin{prop}
    Let $(X,d)$ be a metric space. Let $J,K: X \to X$
    satisfying~(\ref{SalIneq}).
    Then~$J$ and $K$ are constant functions. Further, the
    conclusion of Assertion~\ref{SalujaEtal3.1} is true for all
    such $J,K$ if and only if $\#X = 1$.
\end{prop}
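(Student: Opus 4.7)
The plan is to read the inequality~(\ref{SalIneq}) as an inequality on nonnegative numbers and exploit the fact that $\xi<1$. First I would rewrite~(\ref{SalIneq}) as
\[ d(Ju,Jq) \le (\xi - 1)\, d(Ku,Kq).
\]
Since $\xi - 1 < 0$ and $d(Ku,Kq) \ge 0$, the right side is $\le 0$, while the left side is $\ge 0$. Hence $d(Ju,Jq)=0$ and, since $(\xi-1)d(Ku,Kq)\ge 0$ while $(\xi-1)<0$, also $d(Ku,Kq)=0$. As $u,q \in X$ were arbitrary, both $J$ and $K$ are constant. Write $J \equiv j_0$ and $K \equiv k_0$ for some $j_0,k_0 \in X$. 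This establishes the first sentence of the proposition.

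For the biconditional, the forward direction is immediate: if $\#X = 1$, the single point of $X$ is trivially a common fixed point of $J$ and $K$, so the conclusion of Assertion~\ref{SalujaEtal3.1} holds vacuously for every pair $J,K$. For the converse, I would argue by contrapositive. Suppose $\#X \ge 2$ and pick distinct points $j_0,k_0 \in X$. Define $J \equiv j_0$ and $K \equiv k_0$. Then
\[ d(Ju,Jq) + d(Ku,Kq) = 0 \le \xi \cdot 0 = \xi\, d(Ku,Kq)
\]
for every $\xi \in [0,1)$, so~(\ref{SalIneq}) holds. The constant map $K$ is (both digitally and metrically) continuous, and for any $x \in X$ we have $d(J(K(x)),K(J(x))) = d(j_0,k_0) = d(J(x),K(x))$, so $(J,K)$ is weakly commutative. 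Thus all three hypotheses of Assertion~\ref{SalujaEtal3.1} are met. A common fixed point $x$ would satisfy $x = Jx = j_0$ and $x = Kx = k_0$, forcing $j_0=k_0$, contradicting our choice. So the conclusion of Assertion~\ref{SalujaEtal3.1} fails for this pair, completing the contrapositive.

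No step looks delicate; the only thing to watch is simply remembering to verify that the constant-function pair $(J,K)$ actually satisfies weak commutativity and continuity, so that the counterexample genuinely refutes Assertion~\ref{SalujaEtal3.1} rather than its hypotheses. That verification is what I have built into the second paragraph above.
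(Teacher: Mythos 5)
Your argument is correct and follows essentially the same route as the paper's own proof: the rearrangement $0 \le d(Ju,Jq) \le (\xi-1)\,d(Ku,Kq) \le 0$ forces both distances to vanish, and the pair of distinct constant maps furnishes the counterexample when $\#X \ge 2$. The only difference is that you spell out the verification of weak commutativity and continuity for the constant pair, which the paper dismisses with ``one sees easily''; that extra care is harmless and slightly more complete.
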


\begin{proof}
    The property~(\ref{SalIneq}) implies
    \[ 0 \le d(Ju,Jq) \le (\xi - 1) d(Ku,Kq) \le 0
    \]
    Hence, $d(Ju,Jq) = 0 = d(Ku, Kq)$, 
    so~$J$ and~$K$ are constant functions.

    Trivially, $\#X = 1$ implies $J,K: X \to X$ are functions with a
    unique common fixed point.
    Suppose $X$ has distinct points $x,y$. Consider the constant functions
    $Ju=x$ and $Ku=y$. One sees easily that this pair 
    of functions satisfies the hypotheses 
    of~Assertion~\ref{SalujaEtal3.1}, yet has no common fixed point.
\end{proof}

\section{\cite{ShaheenEtAl}'s iterative algorithms}
The paper~\cite{ShaheenEtAl} attempts to obtain results suggested by the paper's
title for digital images. We discuss why this is a dubious endeavor, and we 
consider the assertions put forth in~\cite{ShaheenEtAl}.

\subsection{On the wisdom of pursuing this line of research}
A {\em stable iteration procedure} is described as follows~\cite{HarderHicks}.
Let $(X,d)$ be a metric space, $T: X \to X$. Let $x_0 \in X$ and
let $x_{n+1} = f(T,x_n)$ be an {\em iteration procedure} for some function $f$; e.g.,
we might have $f(T,x) = Tx$. Suppose $x_n \to_{n \to \infty} p \in \Fix(T)$. 
Let $\{y_n\}_{n=0}^{\infty} \subset X$. Let $\varepsilon_n = d(y_{n+1}, f(T,y_n))$.
If $\varepsilon_n \to_{n \to \infty} 0$ implies $y_n \to_{n \to \infty} p$, then the
iteration procedure $x_{n+1} = f(T,x_n)$ is {\em $T$-stable} or {\em stable with
respect to} $T$.

Harder and Hicks~\cite{HarderHicks} note that $\{y_n\}_{n=0}^{\infty}$ might typically arise
via approximations due to rounding or discretization in calculating the function~$T$,
so we end up with $y_n$ as an approximation of $x_n$. I.e., start with $y_0 = x_0$ and
$y_{n+1} = f(T,y_n)$. If $f(T,x_n)$ is $T$-stable and $x_n \to_{n \to \infty} p \in \Fix(T)$,
then, claim Harder and Hicks, if $\varepsilon_n \to_{n \to \infty} 0$ then $y_n \to_{n \to \infty} p$.

The description above of a stable iteration procedure seems to belong
      naturally in the world of real analysis. By contrast, digital metric spaces
      live in~$\Z^n$ and typically use uniformly discrete metrics,
      where it seems likely that rounding and discretization errors
      will often be nil, yielding $y_n = x_n$.

\subsection{\cite{ShaheenEtAl}'s inappropriate citations and related errors}
\cite{ShaheenEtAl} has several notions that are not properly quoted or attributed. 
These include the following.

Definition~1 (what we call $c_u$ adjacency - see our Definition~\ref{cu-adj-Def}): 
``... if there is $r$ indices ..." should be ''if there are at most $r$ indices ..."

\[
\begin{array}{lll}
\underline{Notion} & \underline{\cite{ShaheenEtAl}~attrib.}
& \underline{Better~attrib.} \\
Def.~5~(digital~interval) & \cite{EgeKaraca-Ban} & \cite{Bx94} \\
Def.~6~(connected~dig.~img.) &  \cite{EgeKaraca-Ban} & \cite{Rosenf79} \\
Def.~7~(dig.~metric~sp.) & \cite{SriEtal} & \cite{EgeKaraca-Ban}
\end{array}
\]

``Proposition"~1~\cite{EgeKaraca-Ban} has the error discussed in
section~\ref{popularErrSec}, claiming incorrectly
that a digital contraction map must be digitally continuous.

Theorem~1~\cite{HanBan} claims that a digital metric space is a complete
metric space. While this is true under an additional assumption, a counterexample
appears at Example~2.9 of~\cite{BxSt19}.

``Definition"~1 is, at best, incomplete; it doesn't define anything.

Lemma~1~\cite{Berinde} is incorrectly quoted as follows. A sequence
$\{\varepsilon_n\}_{n=0}^{\infty}$ of real numbers is hypothesized. It should be stated that
$\varepsilon_n \to_{n \to \infty} 0$, but, instead, $\nu_n \to_{n \to \infty} 0$ is stated.
It is easily seen that the misstated version of the lemma in~\cite{ShaheenEtAl} is false.

Definition~12~\cite{BotmartEtAl} seeks to define a sequence 
\[ t_{n+1} = f_{F,\alpha_n}(t_n)
\]
although there is no definition in the paper that can be applied to the
symbol $f_{F,\alpha_n}$. Since Definition~12 is meant to be the definition of
{\em stability} that is the focus of the rest of the paper, the merits of the entire
paper are in question. Consulting the ``definition's" source, the paper~\cite{BotmartEtAl},
offers no help since, as noted in~\cite{BxBad8}, the same omission occurs there.

\subsection{\cite{ShaheenEtAl}'s Example 1}
The authors seek to use in this example a function 
$F: (\N^*, \tilde{\mu}, \rho) \to (\N^*, \tilde{\mu}, \rho)$ given by
\[ F(t) = \frac{t}{2} + 1
\]
where the triple $(\N^*, \tilde{\mu}, \rho)$ is a digital image based on
the nonnegative integers $\N^*$. But this is inappropriate, since~$F$
is not integer-valued.

\section{Further remarks}
We paraphrase~\cite{BxBad8}:
\begin{quote}
We have discussed several papers that seek to advance
fixed point assertions for digital metric spaces.
Many of these assertions are incorrect, incorrectly proven, 
or reduce to triviality; consequently, all of these
papers should have been rejected or required to undergo
extensive revisions. This reflects badly not only on
the authors, but also on the referees and editors who
approved their publication.
\end{quote}

\end{document}